\numberwithin{equation}{section}
\definecolor{ForestGreen}{rgb}{0.1,0.6,0.05}
\definecolor{EgyptBlue}{rgb}{0.063,0.1,0.6}
\numberwithin{equation}{section}
\newtheorem{thm}{Theorem}
\newtheorem{lemma}{Lemma}
\newtheorem{prop}{Proposition}
\newtheorem{cor}{Corollary}
\theoremstyle{definition}
\newtheorem{remark}[thm]{Remark}
\title{On an inverse  spectral problem and a  generalized  Sturm's nodal theorem for nonlinear boundary value problems\\ \medskip}
\author[1]{Y.Sh.~Ilyasov \thanks{E-mail: \texttt{ilyasov02@gmail.com}}}
\author[2]{N. F.~Valeev \thanks{E-mail: \texttt{valeevnf@mail.ru}}\thanks{N. F.~Valeev  was
partly supported by RFBR grant N18-51-06002 Az-a}}
\affil[1]{\small Institute of Mathematics of UFRC RAS,  450008, Ufa, Russia
\newline
Instituto de Matem\'atica e Estat\'istica.
Universidade Federal de Goi\'as,
74001-970, Goiania, Brazil}
\affil[2]{\small Institute of Mathematics of UFRC RAS,  450008, Ufa, Russia, \newline
Bashkir State University, 450076, Ufa, Russia}
\date{}
\begin{document}
\maketitle

\begin{abstract}
We consider an inverse optimization spectral problem for the Sturm-Liouville operator $\mathcal{L}[q] u:=-u''+q(x)u$   subject to the separated boundary conditions. In the main result, we prove that  this problem is related to the existence of solutions of the boundary value problems for the nonlinear equations of the form
$-u''+q_0(x) u=\lambda u+\sigma u^3$ with $\sigma=1$ or $\sigma=-1$.
The key outcome of this relationship is a generalized Sturm's nodal theorem for the nonlinear boundary value problems.
\end{abstract}

\maketitle

%We establish a  relationship between an inverse optimization spectral problem
%for N-dimensional Schr\"odinger equation $
%-\Delta \psi+q\psi=\lambda \psi
%$ and a solution of the nonlinear
%boundary value problem $-\Delta u+q_0 u=\lambda u- u^{\gamma-1},~~u>0,~~
	%u|_{\partial \Omega}=0$. Using this relationship, we find an
%exact solution for the inverse optimization spectral problem, investigate its stability and
%obtain new results on the existence and uniqueness of the solution for the nonlinear boundary
%value problem.	
	%
%
%
%\end{abstract}

%\begin{keyword}
 %Schr\"odinger operator\sep inverse spectral problem\sep nonlinear elliptic equations;
%%% keywords here, in the form: keyword \sep keyword
%35P30\sep 35R30 \sep 35J65 \sep 35J10
%\sep  35J60
%%% MSC codes here, in the form: \MSC code \sep code
%%% or \MSC[2008] code \sep code (2000 is the default)
%
%\end{keyword}
%
%\end{frontmatter}

%% \linenumbers

%% main text
\section{Introduction}
In the present paper we are concerned with the relations between the existence of solutions to the so called \textit{inverse optimization spectral problem} (\cite{ilValMatZam, ilVal}) for the Sturm-Liouville operator
\begin{equation} \label{eq:S}
\mathcal{L}[q] u:=-u''+q(x)u\,\,\,\,
\end{equation}
subject to the separated boundary conditions
\begin{eqnarray}
	&&u(0)\cos\alpha+u'(0) \sin \alpha=0,\\
	&& u(\pi)\cos\alpha+u'(\pi)\sin\alpha=0,
	\end{eqnarray}
and the existence of weak solutions of the nonlinear boundary value problem:
\begin{equation} \label{eq:Nonl}\tag{$NP_{\delta}$}
\begin{cases}
-u''+q_0(x) u=\lambda u+\delta u^3,~~~ x\in (0,\pi),	\\
~~u(0)\cos\alpha+u'(0) \sin \alpha=0,\\
~~u(\pi)\cos\alpha+u'(\pi)\sin\alpha=0,
\end{cases}
\end{equation}
with $\delta=1$ or $\delta=-1$.
	
We suppose that $q \in L^2:=L^2(0,\pi)$. Under these conditions   $\mathcal{L}[q]$ defines a self-adjoint operator on the Hilbert space  $L^2(0,\pi)$ (see, e.g., \cite{edmund, Poschel, zetl}), so that its spectrum consists of an infinite sequence of eigenvalues $\sigma_p(\mathcal{L}[q]):=\{\lambda_i(q) \}_{i=1}^{\infty}$ which can be ordered as follows  $\lambda_1(q)<\lambda_2(q)< \ldots$.
Furthermore, to each eigenvalue $\lambda_k(q)$ corresponds a unique (up to a normalization constant) eigenfunction $\phi_k(q)$
 which has exactly $k-1$ zeros in $(0,\pi)$.

The inverse spectral problem which consists in recovering of the potential
 $q(x)$ from a knowledge of the spectral data  $\{\lambda_i(q) \}_{i=1}^{\infty}$
is a classical problem and, beginning with the celebrated  papers by Ambartsumyan \cite{ambar} in 1929, Borg in 1946 \cite{borg}, Gel'fand \& Levitan \cite{gelL} in 1951, it received a lot of attention.

%
% Then the inverse problem will have infinitely many solutions and  additional conditions have to be taken into account. In our approach, we suppose that there is an approximation version $q_0$ of the potential. Then  it is natural to consider the following optimization inverse  problem: for given $q_0$ and $\{\lambda_i \}_{i=1}^{N}$, $N<+\infty$, it is required to find the potential function $\hat{q}$ closest to it, in some norm, such that $\lambda_1=\lambda_1(\hat{q}), ..., \lambda_N= \lambda_N(\hat{q})$.

%With only finite given spectral data,  the inverse problem will have infinitely many solutions, and  additional conditions have to be imposed. In our approach,  an approximation version $q_0$ of the potential is supposed to be known. Then  it is natural to consider the following optimization inverse  problem: \textit{for given $q_0$ and $\{\lambda_i \}_{i=1}^{N}$, $N<+\infty$, it is required to find the potential function $\hat{q}$ closest to $q_0$, in some norm, such that $\lambda_1=\lambda_1(\hat{q}), ..., \lambda_N= \lambda_N(\hat{q})$.}

It is well known (see, e.g., \cite{borg, gelL}) that the inverse spectral problem with only finite  given set of eigenvalues
$\{\lambda_i \}_{i=1}^{m}$, $m<+\infty$   have infinitely many
solutions  and in general is meaningless.
However, if one assume that a certain information about the potential $q$ is known  in advance, for instance, it is given an approximate function $q_0$ of the potential $q$, then it is natural to consider  the following \textit{m-parameter inverse optimization spectral problem}: for a given $q_0$ and $\{\lambda_i \}_{i=1}^{m}$, $m<+\infty$, find a potential $\hat{q}$ closest to $q_0$ in a prescribed norm, such that $\lambda_i=\lambda_i(\hat{q})$ for all $i=1, \ldots, m$.

 In the present paper, we  study the following 1-parameter variant of this problem:
Let $k\geq 1$.
\medskip

\par\noindent
$P(k)$:\,\,\textit{For a given $\lambda \in \mathbb{R}$ and $q_0 \in L^2(0,\pi)$, find   a potential  $\hat{q} \in L^2(0,\pi)$ such that  $\lambda=\lambda_k(\hat{q})$ and }

\begin{equation*}
%\label{VaR}
	\|q_0-\hat{q} \|_{L^2}=\inf\{\|q_0-q\|_{L^2}:~~ \lambda=\lambda_k(q), ~~q \in L^2(0,\pi)\}.
\end{equation*}

\medskip

%$(P^0):$\,\,\textit{Given $\lambda \in \mathbb{R}$ and $V_0 \in L^p(0,\pi)$. Find   a potential  $\hat{V} \in L^p(0,\pi)$ such that the first eigenvalue $\lambda_1(V)$ of $\mathcal{L}_V$ coincides with $\lambda$, i.e. $\lambda=\lambda_1(\hat{V})$ and it satisfies}
%\begin{equation}\label{Var}
	%\|V_0-\hat{V} \|_{L^p}=\min\{||V_0-V||_{L^p}:~~ \lambda=\lambda_1(V), ~~V \in L^p(0,\pi)\}.
%\end{equation}

 Our main result is as follows

\begin{thm}\label{thm1}
Let $q_0 \in L^2(0,\pi)$ be a given potential, $k\geq 1$ .
Then
\par
$(1^o)$ for any $\lambda \in \mathbb{R}$,
there exists  a solution  $\hat{q}$ of inverse optimization spectral problem $P(k)$.

Furthermore,
\par
$(2^o)$  for $\lambda< \lambda_k(q_0)$,  there exists a non-zero weak  solution $\hat{u}_\delta $  of \eqref{eq:Nonl}$|_{\delta=1}$,
and for $\lambda> \lambda_k(q_0)$, there exists a non-zero weak  solution $\hat{u}_\delta $  of \eqref{eq:Nonl}$|_{\delta=-1}$  so that the following explicit formula holds
$$
\hat{q}=q_0-\delta\hat{u}_\delta^2~~\mbox{a.e. in}~~ (0,\pi).
$$
\par
$(3^o)$ The solution $\hat{u}_\delta(x)$ of \eqref{eq:Nonl}, $\delta=\pm 1$ possess precisely $k-1$ roots in $(0,\pi)$.
\par

\end{thm}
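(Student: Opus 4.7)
My plan is to prove the three parts in the order they are stated, treating part $(1^o)$ by the direct method of the calculus of variations and then exploiting the associated Lagrange multiplier relation to deduce parts $(2^o)$ and $(3^o)$ simultaneously. To begin with $(1^o)$, I would first check that the admissible set $\mathcal{A}_\lambda:=\{q\in L^2(0,\pi):\lambda_k(q)=\lambda\}$ is nonempty: since $\lambda_k(q_0+c)=\lambda_k(q_0)+c$ for any real constant $c$, one can always reach any prescribed $\lambda\in\mathbb{R}$. For a minimizing sequence $\{q_n\}\subset\mathcal{A}_\lambda$, the bound $\|q_n-q_0\|_{L^2}\le C$ yields weak $L^2$-precompactness, so $q_n\rightharpoonup \hat q$ up to a subsequence. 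The norm $\|\cdot-q_0\|_{L^2}$ is weakly lower semicontinuous, and the $k$-th eigenvalue $\lambda_k(\cdot)$ is weakly continuous on $L^2(0,\pi)$ (the resolvent of $\mathcal{L}[q]$ is compact on $L^2$, and weak convergence of $q_n$ together with the compact embedding $W_2^1\hookrightarrow L^\infty$ gives norm convergence of the resolvents, hence of the eigenvalues); this is the step where I expect the most technical care, since it is what keeps the constraint $\lambda_k(\hat q)=\lambda$ alive in the limit. Thus $\hat q\in\mathcal{A}_\lambda$ realizes the infimum.

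For $(2^o)$, I would apply the Lagrange multiplier rule to the constrained minimization of $q\mapsto \tfrac12\|q-q_0\|_{L^2}^2$ on $\mathcal{A}_\lambda$. The derivative of $\lambda_k$ at $q$ in direction $v\in L^2$ is, by the Hellmann--Feynman formula,
\begin{equation*}
\langle \lambda_k'(q),v\rangle=\int_0^\pi \phi_k(q)^2\,v\,dx,
\end{equation*}
where $\phi_k(q)$ is normalized in $L^2$. Since $\phi_k(\hat q)\not\equiv 0$, the constraint is regular, so there exists $\mu\in\mathbb{R}$ with $\hat q-q_0=\mu\,\phi_k(\hat q)^2$ a.e. in $(0,\pi)$. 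Substituting this into the eigenvalue equation $-\phi_k''+\hat q\,\phi_k=\lambda\,\phi_k$ yields
\begin{equation*}
-\phi_k''+q_0\,\phi_k=\lambda\,\phi_k-\mu\,\phi_k^{\,3}.
\end{equation*}
Now I set $\hat u_\delta:=|\mu|^{1/2}\phi_k(\hat q)$, so that $\mu\phi_k^{\,3}=\mathrm{sgn}(\mu)\,\hat u_\delta^{\,3}$ and $\hat q-q_0=\mathrm{sgn}(\mu)\,\hat u_\delta^{\,2}$. The sign of $\mu$ is forced by the monotonicity $q_1\le q_2\Rightarrow \lambda_k(q_1)\le \lambda_k(q_2)$: if $\lambda<\lambda_k(q_0)$, then passing from $q_0$ to $\hat q=q_0+\mu\phi_k^2$ must decrease $\lambda_k$, which (since $\phi_k^2\ge0$) requires $\mu<0$, giving $\delta=1$ and the equation \eqref{eq:Nonl}$|_{\delta=1}$; the case $\lambda>\lambda_k(q_0)$ is symmetric with $\mu>0$ and $\delta=-1$. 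In either case $\hat q=q_0-\delta\hat u_\delta^{\,2}$, and the boundary conditions for $\hat u_\delta$ are inherited from those for $\phi_k(\hat q)$. The nontriviality $\mu\ne 0$ (hence $\hat u_\delta\not\equiv 0$) is precisely the assumption $\lambda\ne \lambda_k(q_0)$.

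Finally, $(3^o)$ is an immediate consequence of the construction: $\hat u_\delta$ is a nonzero scalar multiple of $\phi_k(\hat q)$, and by the classical Sturm oscillation theorem recalled in the introduction the $k$-th eigenfunction of $\mathcal{L}[\hat q]$ subject to the separated boundary conditions has exactly $k-1$ interior zeros in $(0,\pi)$. The main obstacle in the whole argument is the weak continuity of $q\mapsto \lambda_k(q)$ on $L^2$ needed in part $(1^o)$; once that is secured, the Lagrange multiplier step is essentially algebraic, and the nodal count in $(3^o)$ is free.
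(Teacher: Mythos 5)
Your overall strategy coincides with the paper's: minimize $\|q-q_0\|_{L^2}$ over the constraint set $\{q:\lambda_k(q)=\lambda\}$ by the direct method, then use the Lagrange multiplier rule with the Hellmann--Feynman derivative $D\lambda_k(q)h=\int\phi_k^2 h\,dx$ to get $\hat q-q_0=\mu\phi_k(\hat q)^2$, substitute into the eigenvalue equation, rescale, and read off the nodal count from Sturm's theorem. The genuine gap is the step you yourself flag: the weak closedness of the constraint set. You assert that $\lambda_k(\cdot)$ is weakly continuous on $L^2$ because ``weak convergence of $q_n$ \ldots gives norm convergence of the resolvents,'' but this does not follow from the stated ingredients: the naive estimate for $(\mathcal{L}[q_n]+a)^{-1}-(\mathcal{L}[\hat q]+a)^{-1}$ involves the multiplication operator by $q_n-\hat q$, whose norm from $C[0,\pi]$ to $L^2$ is comparable to $\|q_n-\hat q\|_{L^2}$ and does \emph{not} tend to zero under weak convergence. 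Making this work requires a genuine compactness argument, and this is precisely where the paper invests its effort: Lemma~\ref{lem2} gives a uniform lower bound for $\lambda_1(q_j)$ over $L^2$-bounded sets (so that $|\lambda_m(q_j)|$, $m\le k$, is bounded), Proposition~\ref{prop2} then bounds the eigenfunctions $\phi_m(q_j)$ in $W^{2,2}$, one extracts strong $C^1$-limits $\phi_m^*$, passes to the limit in the integral (Green's function) form of the eigenvalue equation using $q_j\rightharpoonup\hat q$ against the strongly convergent $\phi_m(q_j)$, and finally identifies the index of the limiting eigenpair: the nodal count of $\phi_m^*$ (at most $m-1$ zeros, inherited from the $C^1$-convergence) together with the orthogonality $\langle\phi_1^*,\phi_2^*\rangle=0$ forces $\lambda_m^*=\lambda_m(\hat q)$. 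Without this index-identification step, even convergence of the operators would not by itself exclude that the limit of the $k$-th eigenvalues is some other eigenvalue of $\mathcal{L}[\hat q]$. Your proposal needs to supply this entire chain (or an honest proof of norm resolvent convergence plus simplicity of the spectrum) for $(1^o)$ to stand.

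Two smaller remarks. Your observation that $\lambda_k(q_0+c)=\lambda_k(q_0)+c$ makes the admissible set nonempty is a nice explicit point the paper omits. And your determination of the sign of $\mu$ via the monotonicity $q_1\le q_2\Rightarrow\lambda_k(q_1)\le\lambda_k(q_2)$ is a legitimate alternative to the paper's route, which instead invokes the nonexistence result of Lemma~\ref{lem1} (a Sturm comparison argument) to rule out the wrong sign of $\delta$; both arguments are correct, and yours is arguably more elementary, though the paper's Lemma~\ref{lem1} is also what powers Corollary~\ref{cor1}, so it is not wasted effort there.
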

The case $k=1$ with the zero Dirichlet boundary conditions has been  studied in our recent papers \cite{ilValMatZam, ilVal} where the existence and uniqueness of solution $\hat{q}$ has been proven in the case $\lambda> \lambda_1(q_0)$. Furthermore, in this case, stronger result holds, namely: the uniqueness theorem for \eqref{eq:Nonl}$|_{\delta=-1}$  is satisfied so that \eqref{eq:Nonl}$|_{\delta=-1}$  possess a unique positive solution for $\lambda> \lambda_1(q_0)$.
\begin{remark}
\textit{For $\lambda=\lambda_k(q_0)$, problem $P(k)$ becomes trivial since in this case $\hat{q}=q_0$.}
\end{remark}
\begin{remark}
 \textit{Any eigenfunction $\phi_k(q)$ of $\mathcal{L}[q]$, as well as any weak solution $\hat{u} \in W^{1,2}_0(0,\pi)$ of \eqref{eq:Nonl}, $\delta=\pm 1$, obeys  $C^{1, \beta}[0,1]$-regularity, $\beta \in (0,1)$ (see e.g. \cite{GilTrud}).}
\end{remark}
 It is worth pointing out the following result, which in itself is notably important
\begin{lemma} \label{lem1}
Let $k\geq 1$. Then
\begin{description}
\item[(i)]  for any $\lambda\geq \lambda_k(q_0)$, problem \eqref{eq:Nonl}$|_{\delta=1}$ has no non-zero weak solution with $k-1$ or less  roots in $(0,\pi)$; 	
\item[(ii)]  for any $\lambda\leq \lambda_k(q_0)$, problem \eqref{eq:Nonl}$|_{\delta=-1}$ has no non-zero weak solution with $k-1$ or more  roots in $(0,\pi)$ .
	
\end{description}
\end{lemma}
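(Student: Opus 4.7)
The plan is to reduce the nonlinear problem to a linear Sturm–Liouville eigenvalue problem with a perturbed potential, apply the classical Sturm oscillation theorem to identify the index of the eigenvalue, and then derive a contradiction from the strict monotonicity of the eigenvalues with respect to the potential.

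Suppose $u \not\equiv 0$ is a weak solution of \eqref{eq:Nonl} with $\delta \in \{\pm 1\}$. By Remark~2, $u \in C^{1,\beta}$, hence $u \in L^\infty(0,\pi)$, and
$$
\tilde q := q_0 - \delta u^2 \in L^2(0,\pi)
$$
is a legitimate Sturm–Liouville potential. Rearranging \eqref{eq:Nonl} gives
$$
-u'' + \tilde q(x)\,u = \lambda u, \qquad x \in (0,\pi),
$$
with the same separated boundary conditions; in other words, $u$ is an eigenfunction of $\mathcal{L}[\tilde q]$ at eigenvalue $\lambda$. Because $u$ is a non-trivial solution of a linear second-order ODE, unique continuation forces its zeros to be isolated, so its root count in $(0,\pi)$ is a well-defined integer, say $m-1$. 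The classical Sturm oscillation theorem for separated boundary conditions then pins down $\lambda = \lambda_m(\tilde q)$.

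The second ingredient is the strict monotonicity
$$
q_1 \le q_2 \text{ a.e.,}\ q_1 \not\equiv q_2 \ \Longrightarrow\ \lambda_j(q_1) < \lambda_j(q_2) \quad \forall j \ge 1,
$$
a standard consequence of the Rayleigh min-max principle combined with the unique-continuation property of Sturm–Liouville eigenfunctions. Since $u^2 > 0$ on a set of positive measure, in case (i) ($\delta=1$) one has $\tilde q \le q_0$ strictly on a positive-measure set; if $u$ had $m-1 \le k-1$ roots, then
$$
\lambda = \lambda_m(\tilde q) < \lambda_m(q_0) \le \lambda_k(q_0),
$$
contradicting $\lambda \ge \lambda_k(q_0)$. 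Case (ii) ($\delta=-1$) is symmetric: $\tilde q \ge q_0$ strictly, and $m-1 \ge k-1$ would yield $\lambda = \lambda_m(\tilde q) > \lambda_m(q_0) \ge \lambda_k(q_0)$, contradicting $\lambda \le \lambda_k(q_0)$.

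The only point requiring real care is the \emph{strict} (rather than merely weak) monotonicity of $\lambda_m$ in $q$: the weak form is immediate from min-max, but ruling out equality rests on the observation that the relevant Rayleigh extremizers cannot vanish a.e.\ on the support of $u^2$, which in turn is a consequence of unique continuation for the linear operator $\mathcal{L}[\tilde q]$. Once this is in hand, the reduction from the nonlinear to the linear problem and the application of Sturm oscillation are textbook, so the argument assembles in one page.
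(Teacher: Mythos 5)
Your proof is correct, but it takes a genuinely different route from the paper's. The paper argues by direct Sturm comparison: it writes the two equations $u''+(-q_0+\lambda+u^2)u=0$ and $\phi_k''+(-q_0+\lambda_k(q_0))\phi_k=0$ and invokes the Sturm comparison theorem to conclude that $u$, whose equation has the pointwise larger coefficient, must oscillate faster than $\phi_k(q_0)$, i.e.\ have more than $k-1$ roots --- an immediate contradiction in one step. You instead absorb $-\delta u^2$ into the potential, use the oscillation theorem to pin down the index ($\lambda=\lambda_m(\tilde q)$ when $u$ has $m-1$ roots), and then conclude via strict monotonicity of $\lambda_m$ in the potential. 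Both are sound. The paper's argument is shorter but leans on the strict form of the comparison theorem for Robin-type separated conditions, where the endpoint analysis requires some care (and where the coefficient inequality is strict only off the zero set of $u$). Your argument trades that for the strict monotonicity $q_1\le q_2$, $q_1\not\equiv q_2 \Rightarrow \lambda_j(q_1)<\lambda_j(q_2)$, which you correctly flag as the only delicate point; note that within this paper's own toolkit the cleanest way to nail it down is not min-max but Lemma~\ref{lem3}: integrating the derivative formula \eqref{eq:Val} along the segment $q_t=q_0-t\,\delta u^2$ gives $\lambda_m(\tilde q)-\lambda_m(q_0)=-\delta\int_0^1\!\!\int_0^\pi \phi_m^2(q_t)\,u^2\,dx\,dt$, which is strictly negative (resp.\ positive) for $\delta=1$ (resp.\ $\delta=-1$) because the eigenfunctions $\phi_m(q_t)$ have only finitely many zeros while $u^2>0$ on a set of positive measure. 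With that observation your proof is complete and self-contained.
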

 From Theorem \ref{thm1} and Lemma \ref{lem1} it follows
\begin{cor} \label{cor1} Assume $q_0 \in L^2(0,\pi)$. Then

\par
$(1^o)$ For any $\lambda \in \sigma_p(\mathcal{L}[q])$, nonlinear boundary value problem \eqref{eq:Nonl}$|_{\delta=1}$ has no non-zero weak solution with $k-1$ or less  roots in $(0,\pi)$, and  problem \eqref{eq:Nonl}$|_{\delta=-1}$ has no non-zero weak solution with $k-1$ or more  roots in $(0,\pi)$.

\par
$(2^o)$ For any $\lambda\in (\lambda_{k-1}(q_0), \lambda_k(q_0))$, $k\geq 1$, nonlinear boundary value problem \eqref{eq:Nonl}$|_{\delta=1}$  possess an infinite sequences  distinct weak solutions $(u_\delta^l)_{l=k}^\infty$. Moreover,  $u_\delta^l$, $l=k, \ldots$ has precisely $l-1$ roots in $(0,\pi)$.

\par
$(3^o)$ For any $\lambda\in ( \lambda_k(q_0), \lambda_{k+1}(q_0))$, $k\geq 1$, nonlinear boundary value problem \eqref{eq:Nonl}$|_{\delta=-1}$  possess at least $k$  distinct weak solutions $(u_\delta^l)_{l=1}^k$. Moreover, $u_\delta^l$, $l=1, \ldots, k$ has precisely $l-1$ roots in $(0,\pi)$.
\end{cor}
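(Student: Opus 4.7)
The plan is to view Corollary \ref{cor1} as a direct bookkeeping consequence of Theorem \ref{thm1} and Lemma \ref{lem1}: apply these results separately for each admissible nodal index $l$, and then separate the resulting solutions by their zero counts.

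For $(1^o)$ I would fix $\lambda=\lambda_k(q_0)$; then both hypotheses $\lambda\geq\lambda_k(q_0)$ of Lemma \ref{lem1}(i) and $\lambda\leq\lambda_k(q_0)$ of Lemma \ref{lem1}(ii) hold trivially, so the two non-existence claims follow at once.

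For $(2^o)$ I would suppose $\lambda\in(\lambda_{k-1}(q_0),\lambda_k(q_0))$ and let $l\geq k$ be arbitrary. Then $\lambda<\lambda_k(q_0)\leq\lambda_l(q_0)$, so Theorem \ref{thm1}$(2^o)$ applied with the index $l$ in place of $k$ produces a non-zero weak solution $u_{+1}^l$ of \eqref{eq:Nonl}$|_{\delta=1}$, and Theorem \ref{thm1}$(3^o)$ forces this solution to have exactly $l-1$ zeros in $(0,\pi)$. Ranging $l$ over $k,k+1,\dots$ furnishes the required infinite sequence. Part $(3^o)$ is symmetric: for $\lambda\in(\lambda_k(q_0),\lambda_{k+1}(q_0))$ and $l\in\{1,\dots,k\}$ one has $\lambda>\lambda_k(q_0)\geq\lambda_l(q_0)$, so Theorem \ref{thm1}$(2^o)$--$(3^o)$ delivers a weak solution $u_{-1}^l$ of \eqref{eq:Nonl}$|_{\delta=-1}$ with precisely $l-1$ interior zeros, producing $k$ solutions in total.

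I do not anticipate any real obstacle: the whole content of the corollary is the indexing argument above. The only point that warrants explicit mention is the pairwise distinctness within each family, which is automatic because the $l$th member has a zero count different from that of every other member, so the functions $u_{\pm1}^l$ cannot coincide for different values of $l$.
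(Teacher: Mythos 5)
Your proposal is correct and follows essentially the same route as the paper: $(1^o)$ is obtained by noting that at $\lambda=\lambda_k(q_0)$ both hypotheses of Lemma \ref{lem1} hold simultaneously, and $(2^o)$--$(3^o)$ follow by applying Theorem \ref{thm1} with the nodal index $l$ in place of $k$ together with the ordering $\lambda_1(q_0)<\lambda_2(q_0)<\cdots$. The paper states this last step only as ``immediately follows''; your explicit indexing and the observation that distinctness is forced by the differing zero counts are exactly the details being elided.
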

Here, it is assumed that $\lambda_0(q_0)=-\infty$.

\medskip

We emphasize that this result is nothing more than a generalization of the well known Sturm's nodal theorem to the nonlinear problem.
As far as we know, such a method of proving of this type of statement is new.

This paper is organised as follows.
Section 2 contains some preliminaries and the proof of Lemma \ref{lem1}. In Section 3, we give the proof of Theorems \ref{thm1}.

\section{Preliminaries}
In what follows, we denote by $\left\langle \cdot, \cdot \right\rangle $ and $\|\cdot\|_{L^2}$   the scalar product and the norm in  $L^2(0,\pi)$, respectively; $W^{1,2}(0,\pi), W^{2,2}(0,\pi)$ are usual Sobolev spaces with the norms
$$
\|u\|_{1}=\left(\int^\pi_0 |u |^2 dx+\int^\pi_0 |u'|^2 dx\right )^{1/2},~~~\|u\|_{2}=\left(\int^\pi_0 |u |^2 dx+\int^\pi_0 |u''|^2 dx\right )^{1/2}.
$$
 $W^{1,2}_0:=W^{1,2}_0(0,\pi)$ is the closure of $C^\infty_0(0,\pi)$ in the norm
$$
\|u\|_{1}=\left(\int^\pi_0 |\nabla u |^2 dx\right )^{1/2}.
$$

In what follows, we shall always suppose that  $\|\phi_k(q)\|_{L^2}=1$, $k=1,2,...$.
\begin{prop}\label{prop2}
Let $k\geq 1$ and the sequences $(q_j)_{j=1}^\infty$ in $L^2(0,\pi)$ and $(|\lambda_k(q_j)|)_{j=1}^\infty$ in $\mathbb{R}$ are bounded. Then $(\phi_k(q_j))$ is bounded in $W^{2,2}(0,\pi)$.
\end{prop}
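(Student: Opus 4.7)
The plan is to read off the second derivative directly from the eigenvalue equation $-\phi_k'' + q_j\phi_k = \lambda_k(q_j)\phi_k$, so that
\[
\|\phi_k(q_j)''\|_{L^2} \;\leq\; \|q_j\phi_k(q_j)\|_{L^2} + |\lambda_k(q_j)|\,\|\phi_k(q_j)\|_{L^2}.
\]
Since $\|\phi_k(q_j)\|_{L^2}=1$ by the normalization, and the last term is bounded by hypothesis, everything reduces to bounding $\|q_j\phi_k(q_j)\|_{L^2}$ uniformly in $j$. The natural route is to first obtain a uniform $L^\infty$ bound on $\phi_k(q_j)$, using a uniform $W^{1,2}$ bound on $\phi_k(q_j)$, and then estimate $\|q_j\phi_k(q_j)\|_{L^2}\leq \|\phi_k(q_j)\|_{L^\infty}\|q_j\|_{L^2}$.

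So the first step is to estimate $A_j:=\|\phi_k(q_j)'\|_{L^2}$. Testing the equation against $\phi_k(q_j)$ and integrating by parts (which is legal because separated boundary conditions make the boundary terms vanish) gives
\[
A_j^2 \;=\; \lambda_k(q_j) - \int_0^\pi q_j(x)\phi_k(q_j)^2\,dx,
\]
and hence $A_j^2 \leq |\lambda_k(q_j)| + \|q_j\|_{L^2}\|\phi_k(q_j)\|_{L^\infty}^2$. The second step is a one-dimensional Sobolev-type inequality: for any $u\in W^{1,2}(0,\pi)$ with $\|u\|_{L^2}=1$, pick $y\in[0,\pi]$ with $u(y)^2 = 1/\pi$ (mean value), and use $u(x)^2 = u(y)^2 + 2\int_y^x uu'\,dt$ together with Cauchy-Schwarz to obtain
\[
\|u\|_{L^\infty}^2 \;\leq\; \tfrac{1}{\pi} + 2\|u'\|_{L^2}.
\]
Applying this to $\phi_k(q_j)$ and substituting into the previous inequality, one gets $A_j^2 \leq |\lambda_k(q_j)| + \|q_j\|_{L^2}\bigl(\tfrac{1}{\pi}+2A_j\bigr)$. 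With $|\lambda_k(q_j)|\leq \Lambda$ and $\|q_j\|_{L^2}\leq Q$, this is a quadratic inequality in $A_j$ of the form $A_j^2 - 2QA_j - (\Lambda+Q/\pi)\leq 0$, which yields a uniform bound $A_j\leq C(\Lambda,Q)$.

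The third step is to feed this back: the $W^{1,2}$ bound on $\phi_k(q_j)$ together with the Sobolev inequality above gives a uniform $L^\infty$ bound, hence $\|q_j\phi_k(q_j)\|_{L^2}\leq \|\phi_k(q_j)\|_{L^\infty}\|q_j\|_{L^2}$ is uniformly bounded. Returning to the equation $\phi_k(q_j)'' = (q_j-\lambda_k(q_j))\phi_k(q_j)$, both the $L^2$ norm of $\phi_k(q_j)$ and of $\phi_k(q_j)''$ are controlled, so $\phi_k(q_j)$ is bounded in $W^{2,2}(0,\pi)$.

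The only delicate point is the first step: one cannot estimate $\int q_j\phi_k^2$ just by $\|q_j\|_{L^2}\|\phi_k\|_{L^4}^2$ without eventually absorbing an $A_j$ into the left side, and that absorption is precisely what forces the use of the sharp one-dimensional interpolation $\|u\|_{L^\infty}^2\leq \tfrac{1}{\pi}+2\|u'\|_{L^2}$ (linear, not quadratic, in $\|u'\|_{L^2}$). Anything weaker, such as $\|u\|_{L^\infty}^2\leq C\|u\|_{W^{1,2}}^2$, would give a quadratic-in-$A_j$ term on the right and block the absorption. Once that inequality is in hand, the argument is essentially mechanical.
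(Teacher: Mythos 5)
Your argument is correct in its overall architecture but reaches the key intermediate estimate --- a uniform $L^\infty$ bound on $\phi_k(q_j)$ --- by a genuinely different route from the paper. The paper inverts $\mathcal{L}[0]$ and reads the sup-norm bound directly off the integral representation $\phi_k(q_j)=\mathcal{L}[0]^{-1}\bigl(\lambda_k(q_j)\phi_k(q_j)-q_j\phi_k(q_j)\bigr)$, using only continuity of the Green kernel $G_0$ and Cauchy--Schwarz; no energy identity and no absorption are needed, and the bound comes out in one line. You instead run an energy estimate and close it with the one-dimensional interpolation $\|u\|_{L^\infty}^2\le \tfrac1\pi+2\|u'\|_{L^2}$; your closing remark is exactly right that the linearity in $\|u'\|_{L^2}$ is what makes the quadratic inequality in $A_j$ solvable. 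Your route is more elementary and self-contained (no Green's function, hence no need to worry about invertibility of $\mathcal{L}[0]$ for the given boundary parameter $\alpha$), at the cost of being more delicate. From the $L^\infty$ bound onward the two proofs coincide: both read $\phi_k''=(q_j-\lambda_k(q_j))\phi_k$ off the equation and estimate $\|q_j\phi_k(q_j)\|_{L^2}\le\|\phi_k(q_j)\|_{L^\infty}\|q_j\|_{L^2}$.

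One step of yours needs repair. The parenthetical claim that separated boundary conditions make the boundary terms vanish is false in general: integrating $-\phi_k''\phi_k$ by parts produces $-[\phi_k'\phi_k]_0^\pi$, and under $u(0)\cos\alpha+u'(0)\sin\alpha=0$, $u(\pi)\cos\alpha+u'(\pi)\sin\alpha=0$ with $\sin\alpha\cos\alpha\neq0$ this equals $\cot\alpha\,\bigl(\phi_k(\pi)^2-\phi_k(0)^2\bigr)$, which has no reason to be zero; it vanishes only in the Dirichlet and Neumann cases. The damage is limited, because the offending term is bounded by $2\,|\cot\alpha|\,\|\phi_k(q_j)\|_{L^\infty}^2\le 2\,|\cot\alpha|\bigl(\tfrac1\pi+2A_j\bigr)$, i.e.\ it is again linear in $A_j$ and absorbs into your quadratic inequality with only a change of constants. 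State and use that bound instead of asserting the boundary terms vanish, and the proof is complete.
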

\begin{proof}
Notice that the equation $\mathcal{L}[q]\phi_k(q_j)=\lambda_k(q_j)\phi_k(q_j)$ is equivalent to the following integral equality
\begin{equation}\label{eq:S3}
	\phi_k(q_j)(x) = \lambda_k(q_j) \int_0^1 G_0(x,\xi)\phi_k(q_j)(\xi)d\xi -\int_0^1 G_0(x,\xi) q_j(\xi) \phi_k(q_j)(\xi)d\xi,
\end{equation}
where $G_0(x,\xi)$  is the integral kernel of operator $\mathcal{L}[0]^{-1}$.
In view of that $G_0 \in C[0,1] \times C[0,1]$,  \eqref{eq:S3} implies
$$
	\|\phi_k(q_j)\|_{C[0,1]} \leq \left( \Lambda_k\max_{\xi,x}|G_0(x,\xi)|  +\max_{\xi,x}|G_0(x,\xi)|  \|q_j\|_{L^2(0,\pi)}\right)\|\phi_k(q_j)\|_{L^2(0,\pi)},
$$
where $\Lambda_k=\sup_j|\lambda_k(q_j)|$. Now taking into account that the set $\|q_j\|_{ L^2(0,\pi)}$ is bounded we derive
$$
\|\phi_k(q_j)\|_{C[0,1]}<C\|\phi_k(q_j)\|_{L^2(0,\pi)}
$$
for some $C<+\infty$ which does not depend on $j=1,2,...$.
Hence and since $\mathcal{L}[q]\phi_k(q_j)=\lambda_k(q_j)\phi_k(q_j)$ we get
\begin{align*}
\int_0^1|\phi''_k(q_j)|^2dx&\leq \int_0^1|q_j(x)\phi_k(q_j)|^2dx+\Lambda_k \int_0^1|\phi_k(q_j)|^2dx\leq \\
&	\|\phi_k(q_j)\|^2_{C[0,1]}\int_0^1|q_j(x)|^2dx+\Lambda_k \int_0^1|\phi_k(q_j)|^2dx<C_1<+\infty,
\end{align*}
where  $C_1$ does not depend on $j=1,2,...$. Thus, in view of that $\|\phi_k(q_j)\|_{L^2}=1$, we obtain
$$
\|\phi_k(q_j)\|_{W^{2,2}}<C_2<+\infty,
$$
where  $C_2$ does not depend on $j=1,2,...$.
\end{proof}

\begin{lemma}\label{lem2}
If $B$ is a bounded set  in $L^2$, then the family of operators $\mathcal{L}[q]$ is uniformly semi-bounded below on $L^2$ with respect to $q \in B$, i.e.,
$$
-\infty< \mu \leq \inf_{q\in B}\inf\{\left\langle \mathcal{L}[q] \psi, \psi\right\rangle: ~\psi\in L^2,~ \|\psi\|_{L^2}=1\}.
$$
\end{lemma}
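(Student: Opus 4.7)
The plan is to work at the level of the quadratic form and exploit the fact that the perturbation by $q$ only depends on $\|q\|_{L^2}$, which is uniformly controlled on $B$. The variational (min-max) characterization identifies the infimum in the statement with $\lambda_1(q)$, so what I actually need is a lower bound on $\lambda_1(q)$ that is uniform in $q\in B$.

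First, I would rewrite the quadratic form on its natural form domain (a closed subspace of $W^{1,2}(0,\pi)$ encoding the separated boundary conditions). Integration by parts gives, for admissible $\psi$,
\begin{equation*}
\langle \mathcal{L}[q]\psi,\psi\rangle \;=\; \int_0^\pi |\psi'|^2\,dx \;+\; \int_0^\pi q\,\psi^2\,dx \;+\; B_\alpha(\psi),
\end{equation*}
where $B_\alpha(\psi)$ is a $q$-independent boundary contribution (it vanishes in the Dirichlet case $\sin\alpha=0$, and equals $\cot\alpha\bigl(\psi(\pi)^2-\psi(0)^2\bigr)$ otherwise).

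Second, I would estimate the $q$-dependent term via Cauchy--Schwarz and the one-dimensional Sobolev embedding $W^{1,2}(0,\pi)\hookrightarrow L^\infty(0,\pi)$, which yields $\|\psi\|_\infty\leq C\bigl(\|\psi'\|_{L^2}+\|\psi\|_{L^2}\bigr)$. Combined with $\|\psi\|_{L^4}^2\leq \|\psi\|_\infty\|\psi\|_{L^2}$ and the normalization $\|\psi\|_{L^2}=1$,
\begin{equation*}
\Bigl|\int_0^\pi q\,\psi^2\,dx\Bigr| \;\leq\; \|q\|_{L^2}\|\psi\|_{L^4}^2 \;\leq\; C'\|q\|_{L^2}\bigl(\|\psi'\|_{L^2}+1\bigr).
\end{equation*}
A standard trace inequality $|\psi(0)|^2+|\psi(\pi)|^2 \leq \varepsilon\|\psi'\|_{L^2}^2+C_\varepsilon$ treats $B_\alpha(\psi)$ in the same spirit.

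Third, applying Young's inequality $C'\|q\|_{L^2}\|\psi'\|_{L^2}\leq \tfrac14\|\psi'\|_{L^2}^2+C''\|q\|_{L^2}^2$ and choosing $\varepsilon$ small enough to absorb the boundary contribution into $\tfrac14\|\psi'\|_{L^2}^2$, the kinetic term swallows all the negative pieces and one obtains
\begin{equation*}
\langle \mathcal{L}[q]\psi,\psi\rangle \;\geq\; \tfrac12\|\psi'\|_{L^2}^2 - C'''\bigl(\|q\|_{L^2}^2+1\bigr) \;\geq\; -C'''\bigl(M^2+1\bigr) \;=:\; \mu,
\end{equation*}
where $M:=\sup_{q\in B}\|q\|_{L^2}<+\infty$. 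Taking the infimum over $\psi$ first and then over $q\in B$ yields the claim.

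The main obstacle is essentially cosmetic rather than substantive: the perturbation $\int q\psi^2$ involves $q$ only through $\|q\|_{L^2}$, so uniformity on $B$ is automatic once one secures a form-domain estimate. The one place to be careful is the Robin-type boundary term $B_\alpha$ when $\sin\alpha\neq 0$, which is not sign-definite; the trace inequality together with a small fraction of the kinetic energy handles it cleanly.
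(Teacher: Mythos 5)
Your proof is correct, but it takes a genuinely different route from the paper's. You argue directly on the quadratic form: integrate by parts on the form domain, bound the perturbation via $\bigl|\int q\psi^2\bigr|\le \|q\|_{L^2}\|\psi\|_{L^4}^2$ together with the one-dimensional embedding $W^{1,2}\hookrightarrow L^\infty$ and the interpolation $\|\psi\|_{L^4}^2\le\|\psi\|_\infty\|\psi\|_{L^2}$, then absorb everything into a fraction of $\|\psi'\|_{L^2}^2$ by Young's inequality and a trace estimate for the Robin boundary term. This is the classical ``infinitesimal form-boundedness'' (KLMN-type) argument, and it is entirely self-contained, yielding the explicit bound $\mu=-C(M^2+1)$ with $M=\sup_{q\in B}\|q\|_{L^2}$. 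The paper instead follows Shkalikov's operator-level scheme: it writes $\mathcal{L}[q]=\mathcal{L}[0]+Q$, sets $R(a)=(\mathcal{L}[0]+aI)^{-1/2}$, and shows via the explicit eigenbasis $\sin(lx)$ that $\|R(a)QR(a)\|\le C\rho(a)\|q\|_{L^2}$ with $\rho(a)=\sum_l(l^2+a)^{-1}\to 0$, so that $\mathcal{L}[0]+aI+Q>0$ for $a$ large uniformly over $B$; the conclusion is $\langle\mathcal{L}[q]\psi,\psi\rangle>-a$. The two arguments encode the same analytic fact --- the perturbation is controlled by $\|q\|_{L^2}$ alone and is relatively bounded with arbitrarily small relative bound --- but yours is more elementary and gives a concrete constant, while the paper's resolvent-sandwich formulation transfers more readily to abstract (including non-self-adjoint) perturbation settings. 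Your one point of extra care, the non-sign-definite boundary term $\cot\alpha\,(\psi(\pi)^2-\psi(0)^2)$ when $\sin\alpha\neq 0$, is handled correctly by the trace inequality; the paper sidesteps this issue by working with the resolvent of $\mathcal{L}[0]$, into whose definition the boundary conditions are already built.
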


\begin{proof} We develop an  approach proposed by Shkalikov in \cite{Shkalik}.
Write $\mathcal{L}[q] y=\mathcal{L}[0] y+Q y$, where  $\mathcal{L}[0]y=-y''(x)$, $Qy=q(x)y(x)$.  Let $a>0$ be a sufficiently large number. Introduce $R(a):=(\mathcal{L}[0]+aI)^{-1/2}$. Let us estimate the norm of the operator $R(a)QR(a)$.
For arbitrary  $f,g\in L^2(0,\pi)$ we have
$$
\left\langle R(a)QR(a)f,g\right\rangle=\left\langle QR(a)f,R(a)g\right\rangle.
$$
Denote by  $\mu_l=( l)^2 $, $\psi_l(x)=sin( l x)$, $l=1,2,...$ the eigenvalues and eigenfunctions of the operator $\mathcal{L}[0]$. Then $f=\sum_{i=1}^{\infty}f_i\psi_i(x)$, $g=\sum_{j=1}^{\infty}g_j\psi_j(x)$ in $L^2$ and
$$
\left\langle QR(a)f,R(a)g\right\rangle
=\sum_{i=1}^{\infty}\sum_{j=1}^{\infty}\frac{g_jf_i}{s_i(a)s_j(a)}
\left\langle Q\psi_i,\psi_j\right\rangle,
$$
where $s_l(a)=\sqrt{\mu_l+a}$, $l=1,2,\ldots$.
Observe
$$
\left |\left\langle Q\psi_i,\psi_j\right\rangle\right|\leq \max_{x \in [0,1]} |\psi_i(x)| \max_{x \in [0,1]} |\psi_j(x)|\int_0^{\pi}|q(s)|ds < C\|q\|_{L^2},~~i,j=1,2,\ldots,
$$
where $C<\infty$ does not depend on $i,j=1,2,...$.
Hence
\begin{eqnarray*}
	|\left\langle R(a)QR(a)f,g \right\rangle|\leq \sum_{i=1}^{\infty}\sum_{j=1}^{\infty}\frac{|g_j||f_i|}{s_i(a)s_j(a)}|\left\langle Q\psi_i,\psi_j\right\rangle |\leq&&
	\\ C \|q\|_{L_2} \sum_{i=1}^{\infty}\sum_{j=1}^{\infty}\frac{|g_j||f_i|}{s_j(a)s_i(a)}
		\leq C \|q\|_{L_2} \sqrt{\sum_{j=1}^{\infty}|g_j|^2} \sqrt{\sum_{i=1}^{\infty}|f_i|^2}\left(\sum_{i=1}^{\infty}\frac{1}{(s_i(a))^{2}}\right)=
	 &&\\C \,\rho(a) \,\|q\|_{L_2} \|f\|_{L_2}&&,
\end{eqnarray*}
where $\rho(a)=\sum_{i=1}^{\infty}\frac{1}{(s_i(a))^{2}}$.
Thus we obtain
\begin{equation}\label{est}
\left|\left\langle R(a)QR(a)f,g\right\rangle\right|\leq  C\rho(a) \|q\|_{L_2} \|f\|_{L_2},~~~~\forall f,g \in L^2.
\end{equation}
Set $h=R(a)v$ for $v \in L^2(0,\pi)$. Since $\|q\|_{L_2}$ is bounded on $B$ and $\rho(a)\rightarrow 0$ as $a\to +\infty$, we obtain that
for sufficiently large $a$ it satisfies
$$
\left\langle (\mathcal{L}[0]+aI+Q)h,h\right\rangle=  \left\langle v,v\right\rangle+ \left\langle R(a)QR(a)v,v\right\rangle>0,~~\forall v \in L^2.
$$
Thus for sufficiently large $a$ and for any $\psi\in L^2$ such that $\|\psi\|_{L^2}=1$ there holds
$$
\left\langle \mathcal{L}[q] \psi, \psi\right\rangle> -a \left\langle \psi, \psi\right\rangle=-a>-\infty, ~~~\forall q \in B.
$$
This completes the proof.

\end{proof}

%\textcolor[rgb]{1,0,0}{Зде�?ь в конце $\psi\in L^2$. Хот�? в начале доказатель�?тва мы вз�?ли $f,g\in L^2(0,\pi)$. Верно ли �?то? }
%
%\textcolor[rgb]{1,0,0}{По�?мотри верно ли и�?правил $R(a)=(\mathcal{L}[0]+aI)^{-1/2}$?}

From Lemma \ref{lem2} it follows immediately
\begin{cor}\label{cor2}
	If $B$ is a bounded set  in $L^2$, then $\lambda_1(q)\geq \mu>-\infty$ for all $q \in B$, where $\mu$ does not depend on $q \in B$.
\end{cor}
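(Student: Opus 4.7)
The plan is to deduce Corollary \ref{cor2} from Lemma \ref{lem2} by invoking the variational (Rayleigh--Ritz) characterization of the first eigenvalue. Since $\mathcal{L}[q]$ is self-adjoint with compact resolvent and is shown to be bounded below (uniformly for $q \in B$) by Lemma \ref{lem2}, the spectral theorem yields
$$
\lambda_1(q) = \inf\{\langle \mathcal{L}[q]\psi,\psi\rangle : \psi \in \mathrm{dom}(\mathcal{L}[q]),\ \|\psi\|_{L^2}=1\},
$$
and this infimum coincides with the one over the quadratic-form domain associated with the separated boundary conditions.

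The key step is then to observe that, for every admissible $\psi$ in the definition of $\lambda_1(q)$, $\psi$ belongs in particular to the class of unit-norm $L^2$ elements on which the quantity $\langle\mathcal{L}[q]\psi,\psi\rangle$ is meaningful and appears in Lemma \ref{lem2}. Hence, for every $q \in B$,
$$
\lambda_1(q) \;\geq\; \inf\{\langle \mathcal{L}[q]\psi,\psi\rangle : \|\psi\|_{L^2}=1\} \;\geq\; \mu,
$$
with $\mu$ the uniform lower bound produced by Lemma \ref{lem2}, which depends only on a bound for $\sup_{q \in B}\|q\|_{L^2}$. Taking the infimum over $q \in B$ does not alter the conclusion, giving the required $q$-independent lower bound.

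The statement is essentially a one-line consequence of Lemma \ref{lem2} together with the min--max principle, so there is no substantive obstacle. The only minor point requiring care is to interpret the set of test functions consistently: Lemma \ref{lem2} is phrased over $\psi \in L^2$ with $\|\psi\|_{L^2}=1$, while the operator $\mathcal{L}[q]$ acts on a proper subset, and the Rayleigh quotient should be understood via the closed quadratic form $\int_0^\pi(|\psi'|^2+q|\psi|^2)\,dx$ on the appropriate form domain. By the standard equivalence between form- and operator-domain infima for semibounded self-adjoint operators, this causes no difficulty, and the corollary follows at once.
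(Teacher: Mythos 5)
Your proposal is correct and matches the paper's reasoning: the paper derives Corollary \ref{cor2} directly from Lemma \ref{lem2} via the variational (Rayleigh--Ritz) characterization of $\lambda_1(q)$, exactly as you do. The extra care you take about form versus operator domains is a reasonable refinement but not a departure from the paper's (one-line) argument.
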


\begin{lemma}\label{lem3}
For $k\geq 1$, the map 	$\lambda_k(\cdot): L^2(0,\pi) \to \mathbb{R}$  is continuously
differentiable with the Fr\'echet-derivative
\begin{equation}\label{eq:Val}
	D\lambda_k(q)(h)=\frac{1}{\|\phi_k(q)\|^2_{L^2}}\int^\pi_0 \phi_k^2(q) h\, dx, ~~\forall \,q,h \in L^2.
\end{equation}
\end{lemma}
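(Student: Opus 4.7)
The plan is to compute the Gâteaux derivative by differentiating the eigenvalue equation along a path $q_t = q + th$, and then to upgrade to continuous Fréchet differentiability using the $W^{2,2}$-compactness provided by Proposition \ref{prop2}.

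First, I would justify that for each fixed $q$ the simple, isolated eigenvalue $\lambda_k(q)$ and its normalized eigenfunction $\phi_k(q)$ admit smooth branches $t \mapsto (\lambda_k(q_t), \phi_k(q_t))$ near $t=0$. This is a classical consequence of analytic perturbation theory (Rellich--Kato) for self-adjoint families; a self-contained alternative is the implicit function theorem applied to
\begin{equation*}
F(q, u, \lambda) \;=\; \bigl(\mathcal{L}[q]u - \lambda u,\ \|u\|^2_{L^2} - 1\bigr)
\end{equation*}
on $L^2 \times (W^{2,2}\cap \mathrm{BC}) \times \mathbb{R}$, where $\mathrm{BC}$ denotes the separated boundary conditions. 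Simplicity of $\lambda_k(q)$ (asserted in the introduction) is exactly what makes the partial derivative in $(u,\lambda)$ at the eigenpair a bijection: its PDE-part has kernel spanned by $\phi_k(q)$, and the normalization constraint closes it off.

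Next, I would differentiate the eigenvalue equation at $t=0$ to obtain
\begin{equation*}
-\dot\phi'' + q\,\dot\phi + h\,\phi_k(q) \;=\; \dot\lambda\,\phi_k(q) + \lambda_k(q)\,\dot\phi ,
\end{equation*}
and pair with $\phi_k(q)$ in $L^2$. Integrating by parts twice --- the boundary terms vanish because $\dot\phi$ inherits the separated boundary conditions from $\phi_k(q_t)$ --- makes the $\dot\phi$-dependent terms cancel via the eigenvalue equation for $\phi_k(q)$, leaving $\int_0^\pi h\,\phi_k^2(q)\,dx = \dot\lambda\,\|\phi_k(q)\|^2_{L^2}$, which is the formula \eqref{eq:Val}.

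Finally, to upgrade Gâteaux to continuous Fréchet differentiability it suffices to show that $q \mapsto \phi_k^2(q)/\|\phi_k(q)\|^2_{L^2} \in L^2$ is continuous, since continuity of a Gâteaux derivative on a Banach space implies Fréchet differentiability. For $q_j \to q$ in $L^2$, continuity of $\lambda_k(\cdot)$ in $q$ --- obtainable via the min-max characterization together with the embedding $W^{1,2}(0,\pi)\hookrightarrow L^\infty(0,\pi)$ --- gives boundedness of $|\lambda_k(q_j)|$. Proposition \ref{prop2} then yields boundedness of $\{\phi_k(q_j)\}$ in $W^{2,2}$, and the compact embedding $W^{2,2}\hookrightarrow C[0,1]$ provides relative compactness. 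Passing to the limit in the eigenvalue equation identifies every cluster point with an eigenfunction of $\mathcal{L}[q]$ at the eigenvalue $\lim_j \lambda_k(q_j) = \lambda_k(q)$; simplicity then forces it to be $\pm\phi_k(q)$, giving continuity even in $L^\infty$. The main obstacle is making this limit identification watertight: one must exclude that a subsequence $\phi_k(q_j)$ converges to an eigenfunction $\phi_m(q)$ with $m\neq k$, and this is handled by combining the convergence $\lambda_k(q_j)\to\lambda_k(q)$ with either simplicity of the spectrum or preservation of the nodal count $k-1$ under uniform convergence.
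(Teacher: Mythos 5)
Your proposal is correct, but it takes a genuinely different and more self-contained route than the paper. The paper disposes of the lemma in three lines by citation: Fr\'echet differentiability of an isolated eigenvalue together with the formula \eqref{eq:Val} is quoted from M\"oller--Zettl (Corollary 4.2 of \cite{ZettlM}), and continuity of the derivative is obtained from the analyticity of the map $\phi_k(\cdot)\colon L^2\to W^{2,2}$ asserted in P\"oschel--Trubowitz \cite{Poschel}, combined with the embedding $W^{2,2}(0,\pi)\subset L^4(0,\pi)$ so that $q\mapsto \phi_k^2(q)\in L^2$ is continuous. You instead rederive everything from scratch: the formula via Kato--Rellich (or the implicit function theorem with the normalization constraint, whose linearization you correctly identify as bijective thanks to simplicity and the Fredholm alternative), with the boundary terms in the integration by parts vanishing because both $\dot\phi$ and $\phi_k$ satisfy the same separated conditions; and the continuity of the derivative via the compactness machinery of Proposition \ref{prop2} plus identification of cluster points, which is essentially the same argument the authors deploy later in the proof of Theorem \ref{thm1}$(1^o)$ but is not how they prove this lemma. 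Your approach buys independence from the two external references at the cost of length; the only points you should nail down are (a) the continuity of $\lambda_k(\cdot)$ that you need as input, which does follow from min--max because $\bigl|\int (q_j-q)u^2\,dx\bigr|\le C\|q_j-q\|_{L^2}\|u\|_{1}^2$ by the one-dimensional embedding $W^{1,2}\hookrightarrow L^\infty$, and (b) the sign ambiguity in the limit $\pm\phi_k(q)$, which is harmless here only because the derivative depends on $\phi_k^2(q)$ rather than on $\phi_k(q)$ itself; you should state the continuity claim for $\phi_k^2(q)$ rather than for $\phi_k(q)$ ``in $L^\infty$''.
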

\begin{proof} Since $\lambda_k(q)$ is isolated,  Corollary 4.2 from \cite{ZettlM} implies that
$\lambda_k(q)$ is Fr\'echet differentiable and \eqref{eq:Val} holds. By the analyticity property (see  \cite{Poschel}, page 10), the map $\phi_k(\cdot): L^2(0,\pi) \to W^{2,2}(0,\pi)$ is analytic.
Due to the Sobolev theorem,
the embedding $W^{2,2}(0,\pi) \subset L^4(0,\pi)$ is continuous. Hence the map $\phi_k(\cdot): L^2(0,\pi) \to L^4(0,\pi)$ is continuous and therefore the norm of the derivative $D\lambda_1(q)$ continuously depends on $q \in L^2(0,\pi)$. This implies  that $\lambda_k(q)$ is continuously differentiable in $L^2(0,\pi)$.
\end{proof}
%Let us prove Lemma \ref{lem1}

{\it Proof of Lemma \ref{lem1}}

\begin{proof}
We shall give the proof only for \textbf{(i)}. The proof of  \textbf{(ii)} is  in a similar manner.

 Let $\lambda\geq \lambda_k(q_0)$, $k\geq 1$ and $\delta=1$.	Suppose, contrary to our claim, that \eqref{eq:Nonl} has non-zero weak solution $u$ with $k-1$ or less  roots in $(0,\pi)$.	 Consider the following two equalities
\begin{align*}
	&u''+(-q_0+\lambda+u^2)u=0\\
	&\phi_k''+(-q_0+\lambda_k)\phi_k=0
\end{align*}
Observe that $-q_0+\lambda+u^2>-q_0+\lambda_k$. However this implies by the Sturm Comparison Theorem that $u$ should has more then $\phi_k$ roots in $(0,\pi)$  that is more then $k-1$ roots in $(0,\pi)$ which is  a contradiction.

\end{proof}

\section{Proof of the main result}
 We give the proof only for the case $k=2$; the other cases are left to the reader. For the proof in the case $k=1$  and $\lambda>\lambda_1(q_0)$ see also \cite{ilValMatZam, ilVal}.

Let $\lambda^* \in \mathbb{R}$. Consider the following minimization problem
\begin{equation}\label{varM2}
\hat{Q}_{\lambda^*}=\inf\{Q(q):\,\, \lambda^*=\lambda_2(q), \,\,q \in L^2(0,\pi)\},
\end{equation}
where  $Q(q):=||q_0-q||^2_{L^2}$, $q \in L^2(0,\pi)$.

Let $q_j\in L^2(0,\pi)$, $j=1,2,...$ be a minimizing sequence of this problem, i.e.,
 $\lambda_2(q_j)=\lambda^*$ and $Q(q_j) \to \hat{Q}_{\lambda^*}$.
Observe that if $\|q_j\|^2_{L^2} \to +\infty$, then $||q_0-q_j||^2_{L^2} \to +\infty$, i.e., $Q(q)$ is a coercive functional. Thus the sequence $q_j$ is bounded in $L^2(0,\pi)$, and by the Banach-Alaoglu theorem there exists a subsequence which we again denote  $(q_j)$ such that
$
q_j \rightharpoondown \hat{q}~~\mbox{as}~~j\to \infty
$
weakly in  $L^2(0,\pi)$ for some $\hat{q} \in L^2(0,\pi)$.

%�?ормированные �?об�?твенные функции  �?оответ�?твующие потенциалу $q_j(x)$ удовлетвр�?ют уравнению
%\begin{equation} \label{eq:S4}
%-\frac{d^2}{dx^2}\psi_k(q_j)+q_j(x)\psi_k(q_j)=\lambda^* \phi_k(q_j),\,\,\,\, x\in (0,\pi).	\\
%\end{equation}
%и нулевым граничным у�?лови�?м.
%
%�?з ко�?рцитивно�?ти вытекает, что по�?ледовательно�?ть $q_j(x)$ ограничена в $L^2(0,\pi)$. От�?юда по теореме Банаха-�?глоу �?лабо компактна в $L^2(0,\pi)$. По�?тому можно
%выделить �?лабо �?ход�?щую�?�? подпо�?ледовательно�?ть из $q_{j}(x)$, дл�? которой  не ограничива�? общно�?ти можно �?охранить обозначение
%$q_j(x):=q_{j}(x)$. Слабый предел �?той по�?ледовательно�?ти обозначим $q_*(x)$. По�?кольку $\lambda^{*} \neq\lambda_k(q_0)$, то $q_*\neq 0$.

Consider sequences of eigenfunctions $(\phi_1(q_j))$ and $(\phi_2(q_j))$. By assumption, $\lambda^*=\lambda_2(q_j)$ for all $j=1,2,...$. Furthermore, in view of that $q_j$ is bounded in $L^2(0,\pi)$, Corollary \ref{cor1} entails that the sequence $\lambda_1(q_j)$ is bounded below. Hence and since $\lambda_1(q_j)<\lambda_2(q_j)=\lambda^*$, for $j=1,2,...$, we conclude that $|\lambda_1(q_j)|$ is bounded. Thus by Propsition \ref{prop2} the sequences $\phi_1(q_j)$ and $\phi_2(q_j)$ are bounded in $W^{2,2}(0,\pi)$. From this by the Sobolev embedding theorem there exist subsequences which we again denote by $\phi_1(q_j)$ and $\phi_2(q_j)$  such that
\begin{equation} \label{eq:S5}
\phi_1(q_j) \to \phi_1^*, ~~\phi_2(q_j) \to \phi_2^*~~~\mbox{as}~~~j\to +\infty
\end{equation}
strongly in $W^{1,2}(0,\pi) $ and $C^1[0,\pi]$ for some $\phi_1^*,\phi_2^* \in W^{1,2}_0(0,\pi) \cap C^1[0,\pi]$. Notice that, since $\|\phi_1(q_j))\|_{L^2}=1$, $\|\phi_2(q_j))\|_{L^2}=1$, for every $j=1,2,...$, it follows that $\phi_1^*,\phi_2^*\neq 0$

 Furthermore, we may assume, by passing to a subsequence if necessary, that   $\lambda_1(q_j) \to \lambda^*_1$ as $j\to \infty$ for some $\lambda^*_1 \in \mathbb{R}$.

Let $m=1,2$. Then
\begin{align*}\label{eq:S6}
\phi_m(q_j) = \lambda_m(q_j) \int_0^1 G_0&(x,\xi)(\phi_m(q_j)(\xi)-\phi^*_m(\xi))d\xi \\
&-\int_0^1 G_0(x,\xi) q_j(\xi) (\phi_m(q_j)(\xi)-\phi^*_m(\xi))d\xi+ \nonumber\\
+&\lambda_m(q_j) \int_0^1 G_0(x,\xi)\phi^*_m(\xi)d\xi -\int_0^1 G_0(x,\xi) q_j(\xi) \phi^*_m(\xi)d\xi.
\end{align*}
for every $j=1,2,...$. Hence strong convergences \eqref{eq:S5} and the weak convergence $q_j \rightharpoondown \hat{q}$ in $L^2(0,\pi)$ imply
\begin{equation} \label{eq:S8}
 \phi^*_m(x)=\lambda^*_m \int_0^1 G_0(x,\xi)\phi^*_m(\xi)d\xi -\int_0^1 G_0(x,\xi) \hat{q}(\xi) \phi^*_m(\xi)d\xi, ~~~m=1,2,
\end{equation}
and therefore
\begin{equation} \label{eq:S7}
-\frac{d^2}{dx^2}\phi^*_m(x)+\hat{q}(x)\phi^*_m(x)=\lambda^*_m \phi^*_m(x),\,\,\,\, x\in (0,\pi), ~~~m=1,2.
\end{equation}
This means that $(\lambda^*_1, \phi^*_1)$  and $(\lambda^*_2, \phi^*_2)$ coincide with some eigenpairs of the operator $\mathcal{L}[\hat{q}] $, i.e.,
\begin{equation}
	\lambda^*_m=\lambda_{i_m}(\hat{q}),~~~~\phi^*_m=\phi_{i_m}(\hat{q}), ~~~m=1,2,
\end{equation}
for some $i_1, i_2 \in \mathbb{N}$.
Let us show that
$ i_m=m$ for $m=1,2$.
By the Sturm comparison theorem (see e.g.,\cite{zetl}) for each $j=1,2, ... $, every eigenfunction $\phi_m(q_j)(x)$ , $m=1,2$ has precisely $m-1$ roots.

This and the strong convergences \eqref{eq:S5} in $C^1[0,1]$ yield that the limit function $\phi^*_m$ may has at most $m-1$ roots.
Hence we get that $i_2 \leq 2$ and
$i_1=1$, i.e.,
$\lambda^*_1=\lambda_1(\hat{q})$
is a principal eigenvalue of $\mathcal{L}[\hat{q}] $.

Since $\left\langle \phi_1(q_j),\phi_2(q_j)\right\rangle=0$ for all $j=1,2,...$, by passing to the limit we have $\left\langle \phi_1^*,\phi_2^*\right\rangle=0$. Thus $\phi_1^*\neq \phi_2^*$  and therefore
$$
i_2 =2, ~~~\lambda^*_2=\lambda_2(\hat{q}).
$$

Thus $\hat{q}$ is an admissible point for minimization problem \eqref{varM2}. Now taking into account that the weak  convergence  $q_j \rightharpoondown \hat{q}$ in $L^2$ imply
$$
Q(\hat{q})\leq \hat{Q}_{\lambda^*}.
$$
we obtain  that $Q(\hat{q})= \hat{Q}_{\lambda^*}$. Thus  $\hat{q}$ is a solution of \eqref{varM2}.
This concludes the proof of assertion $(1^o)$, Theorem \ref{thm1}.

Let us prove $(2^o)$. Assume $\lambda^*\neq \lambda_2(q_0)$.

Since $Q$ and $\lambda_2(\hat{q})$ are $C^1$-functionals in $L^2$, the Lagrange multiplier rule implies
\begin{equation}
	\mu_1 DQ(\hat{q})(h)+\mu_2D\lambda_2(\hat{q})(h) =0,~~ \forall h \in L^2,
\end{equation}
where $\mu_1,\mu_2 $ such that $|\mu_1|+|\mu_2|\neq 0$. Thus by \eqref{eq:Val} we deduce
\begin{equation}
	\int_\Omega (-2\mu_1  (q_0-\hat{q})+\mu_2\phi_2^2(\hat{q})) h\, dx  =0, \,\, \forall h \in L^2,
\end{equation}
where $\|\phi_2(\hat{q})\|_{L^2}=1$. Hence,
$$
2\mu_1(q_0-\hat{q})=\mu_2 \phi_2^2(\hat{q})~~\mbox{a.e. in}~~\Omega.
$$
Observe that
 $\mu_1 \neq 0,\mu_2\neq 0$. Indeed, if $\mu_1=0$, then $\phi_2(\hat{q})=0$ a.e. in $\Omega$, which is a contradiction. Suppose $\mu_2 =0$, then $q_0=\hat{q}$  a.e. in $\Omega$ and consequently $\lambda^*=\lambda_2(q_0)$ which contradicts our assumption  $\lambda^*\neq \lambda_2(q_0)$. Thus we have
\begin{equation}
	\hat{q}=q_0-\nu \phi_2^2(\hat{q})~~\mbox{a.e. in}~~\Omega,
\end{equation}
with some constant $\nu \neq 0$. Substituting this into the equality
$$
 -\phi_2''(\hat{q})+\hat{q} \phi_2(\hat{q})=\lambda^* \phi_2(\hat{q})
$$
we obtain
\begin{equation}\label{eq:phi}
- \phi_2''(\hat{q})+q_0 \phi_2(\hat{q})=\lambda^* \phi_2(\hat{q})+\nu\phi_2^3(\hat{q}).
\end{equation}
Thus, $\hat{u}=|\nu|^\frac{1}{2}\phi_2(\hat{q})$ satisfies \eqref{eq:Nonl}  and  $\hat{q}=q_0-\delta\hat{u}^{2}$ a.e. in $\Omega$ with $\delta$=sign($\nu$).
Now taking into account Lemma \ref{lem1},  we infer that it must be $\delta=1$ if  $\lambda< \lambda_k(q_0)$, and $\delta=-1$ if $\lambda> \lambda_k(q_0)$.
This concludes the proof of $(2^o)$.

The proof of $(3^o)$ follows immediately since  $\hat{u}(x)=\phi_k(\hat{q})(x)\cdot\|\hat{u}\|_{L^2}$ and by Sturm's nodal theorem the eigenfunction $\phi_k(\hat{q})(x)$ of $\mathcal{L}[\hat{q}]$  has precisely $k-1$ roots.

\medskip

\textit{Proof of Corollary \ref{cor1}. }
For $\lambda= \lambda_k(q_0)$, $k=1,\ldots$, by \textbf{(i)} Lemma \ref{lem1} it follows that   nonlinear boundary value problem \eqref{eq:Nonl}$|_{\delta=1}$ has no non-zero weak solution with $k-1$ or less  roots in $(0,\pi)$ whereas	by \textbf{(ii)} Lemma \ref{lem1},  problem \eqref{eq:Nonl}$|_{\delta=-1}$ has no non-zero weak solution with $k-1$ or more  roots in $(0,\pi)$ which implies  ($1^o$).

Since  $\lambda_1(q)<\lambda_2(q)< \ldots$, the proofs of assertions ($2^o$), ($3^o$) immediately follow from ($2^o$), ($3^o$) Theorem \ref{thm1}.

\end{document}